\newtheorem{Theorem}{Theorem}
\newtheorem{Lemma}[Theorem]{Lemma}
\newtheorem{Corollary}[Theorem]{Corollary}
\newtheorem{Definition}{Definition}
\newtheorem{Remark}{Remark}
\title{Topological rigidity of higher graph manifolds}
\author{No\'e B\'arcenas, Daniel  Juan-Pineda and Pablo Su\'arez-Serrato}
    \date{\today}
 \address{Centro de Ciencias Matem\'aticas. Universidad Nacional Aut\'onoma  de M\'exico  \\Ap.Postal 61-3 Xangari. 58089 Morelia, Michoac\'an  M\'exico }
 \email{barcenas@matmor.unam.mx}
 \urladdr{http://www.matmor.unam.mx /~ barcenas}
\email{daniel@matmor.unam.mx}
  \address{Instituto de Matem\'aticas,
              Universidad Nacional Aut\'onoma de M\'exico,
              Ciudad Universitaria, Coyoac\'an,
              04510. Mexico City, D. F.
              M\'exico}
\urladdr{http://www.matem.unam.mx/PabloSuarezSerrato}
    \email{pablo@im.unam.mx}
\begin{document}

\begin{abstract}
In this short note we prove the Borel conjecture for a family of aspherical manifolds that includes higher graph manifolds. \end{abstract}

\maketitle

\section{Introduction}

The Borel conjecture is a statement about topological rigidity. It states that a homotopy equivalence between two aspherical manifolds is homotopic to a homeomorphism.

A  lot  of  work  in  geometric  topology has  been  done    in  the  last  years with the aim  to  prove  the  Borel  conjecture   using  methods  involving  controlled  topology and  algebraic  $K$-theory. In  particular,  the Borel conjecture was shown by R. Frigerio, J.-F. Lafont, and A. Sisto to hold for the class of graph manifolds studied  in \cite{FLS}.

On  the  other  hand, relationships between several  generalizations  of  the concept of finite  asymptotic  dimension   in  connection with    isomorphism   conjectures, in  algebraic  $K$  and  $L$- theory ,  as  well  as  coarse  versions  of  these   have   been  carried out by G. Carlsson and B. Goldfarb  in \cite{carlsongoldfarbktheorygeom}, \cite{goldfarbweakcoherence}, \cite{carlsongoldfarbequivariantstable}.

The  method  of  proof  of  the Borel  conjecture in this note  uses  these  previous  developments. 

Consider the following construction of smooth $n$--manifolds $M$, for $n\geq3$:

\begin{Definition}\label{mfds}

\begin{enumerate}
\item For every $i = 1, . . . , r$ take a complete finite-volume non-compact pinched negatively curved
    $n_i$--manifold $V_i$, where $2 \leq n_i
    \leq n$.
\item Denote by $M_i$ the compact smooth manifold with boundary obtained by ``truncating the
    cusps'' of $V_i$, i.e. by removing from $V_i$ a {(nonmaximal)} horospherical
    open neighborhood of each cusp.
\item Take fiber bundles $Z_i\to M_i$ with fiber a compact quotient  $N_i$ of an aspherical simply connected Lie group $\widetilde{N_i} $ by the action of a uniform lattice $\Gamma_i$,
    of dimension $n-n_i$, i.e. $N_i$ is diffeomorphic to $\widetilde{N_i}/\Gamma_i$, where $\widetilde{N_i}$ is a simply connected Lie group and
    $\Gamma_i$ is a uniform lattice. 
    
    \item Fix a complete pairing of diffeomorphic boundary components between distinct
    $Z_i$'s, provided one exists, and glue the paired boundary components using diffeomorphisms, to obtain a connected manifold of dimension $n$.
\end{enumerate}

We will call the $Z_i$'s the {\em pieces} of $M$ and whenever $\dim(M_i)=n$, then we say
$Z_i=M_i$ is a {\em pure piece}, (short for purely negatively curved).
\end{Definition}

\begin{Remark}
The construction in the previous definition includes:
\begin{enumerate}[leftmargin=0.25in]
\item The class of
{\bf generalized graph manifolds} of Frigerio,  Lafont and Sisto
\cite{FLS}. The pieces $V_i$ in item $(1)$ above are required to be hyperbolic
with toral  boundary cusps, the $N_i$ in item $(3)$ are required to be tori, and the gluing
diffeomorphisms in item $(4)$ are required to be affine diffeomorphims. These authors produce
examples of manifolds within this class that do not admit any $CAT(0)$ metric. 

\item The family of {\bf cusp-decomposable manifolds} of T. Tam Nguyen Phan
\cite{T}, where interesting (non)rigidity properties are explored. These manifolds only have pure pieces.

\item The {\bf affine  twisted doubles} of hyperbolic manifolds, for which C.S. Aravinda and T. Farrell study in \cite{AF} the existence of nonpositively curved metrics.

\item The {\bf higher graph manifolds} studied in \cite{CS} by C. Connell and the third named author. In that family, item (3) consists of infanilmanifold bundles with affine structure group, which are moreover trivial near the 'cusp boudary' of the negatively curved pieces in the base. In item (4) the glueing diffeomorphisms are restricted to those which are isotopic to affine diffeomorphisms. These two restrictions are used in \cite{CS} to prove statements about collapsing and computations of minimal volume. They turn out not to be needed in the arguments we present for the Borel conjecture to hold true.

\end{enumerate}
\end{Remark}

The following theorem is our main result: 
\begin{Theorem} \label{theoremborel}
Let $M$  be  an  $n$--dimensional manifold constructed as in Definition \ref{mfds}, for $n\geq 6$, then $M$ satisfies the  Borel  conjecture. That  is, given a  homotopy  equivalence $f:M \to M^{'}$,  where  $M^{'}$  is an aspherical  $n$--dimensional manifold,  then $f$ is  homotopic   to  a  homeomorphism. 
\end{Theorem}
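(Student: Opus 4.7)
The plan is to reduce the Borel conjecture for $M$ to the Farrell-Jones isomorphism conjecture in $K$- and $L$-theory for $\pi_1(M)$. Once the appropriate assembly maps are known to be isomorphisms, the vanishing of the topological structure set in dimension $\geq 5$ implies that any homotopy equivalence from $M$ to an aspherical manifold is homotopic to a homeomorphism; for this it is enough to establish FJC in $K$- and $L$-theory for $\pi_1(M)$, or equivalently to establish the coarse $K$- and $L$-theoretic assembly isomorphisms via the framework of Carlsson-Goldfarb cited in the introduction.

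First I would verify that $M$ itself is aspherical. Each $V_i$ is aspherical by pinched negative curvature, and $M_i\subset V_i$ is a compact codimension-zero submanifold with the same fundamental group. The total space $Z_i$ of the bundle $N_i\to Z_i\to M_i$ is aspherical because both fiber and base are. Since the glued boundary components of the $Z_i$ are $\pi_1$-injective and aspherical, a Mayer-Vietoris argument gives asphericity of $M$, while Bass-Serre theory presents $\pi_1(M)$ as the fundamental group of a finite graph of groups $\mathcal{G}$ whose vertex groups are the $\pi_1(Z_i)$ and whose edge groups arise from the paired boundary pieces.

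Next I would establish FJC at the vertex and edge level. The group $\pi_1(V_i)\cong\pi_1(M_i)$ is relatively hyperbolic with respect to its cusp subgroups, which are virtually nilpotent by the Margulis lemma; Bartels' theorem on relatively hyperbolic groups, combined with Wegner's result for virtually polycyclic peripherals, yields FJC for $\pi_1(V_i)$. The fiber group $\pi_1(N_i)$ is a uniform lattice in the aspherical simply connected Lie group $\widetilde{N_i}$, and satisfies FJC by the theorem of Kammeyer-Lück-Rüping. The short exact sequence
\[
1 \longrightarrow \pi_1(N_i) \longrightarrow \pi_1(Z_i) \longrightarrow \pi_1(M_i) \longrightarrow 1
\]
together with the extension-closure of FJC (Bartels-Lück) then gives FJC for each $\pi_1(Z_i)$, and the edge groups, being of the same form, satisfy FJC as well. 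Applying the closure of FJC under fundamental groups of graphs of groups with FJC vertex and edge groups yields FJC for $\pi_1(M)$, from which the Borel conjecture follows in dimension $\geq 5$.

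The main obstacle I expect lies in verifying the technical hypotheses of these closure theorems. For the extension-closure step, one must check that the preimage in $\pi_1(Z_i)$ of every virtually cyclic subgroup of $\pi_1(M_i)$ still lies in a class for which FJC is known; for the graph-of-groups step, one must verify FJC for the preimages in amalgams and HNN extensions along the edge groups. In both cases the relevant groups are built as extensions of virtually nilpotent groups by uniform lattices in aspherical simply connected Lie groups, so they are covered by the closure principles of Bartels-Lück-Reich together with the coarse-geometric isomorphism results of Carlsson-Goldfarb. The substantive work is to assemble these ingredients consistently across the decomposition $\mathcal{G}$, verify the required finite asymptotic dimension and geometric finiteness hypotheses uniformly, and deduce topological rigidity for $M$.
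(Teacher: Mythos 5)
Your proposal takes a genuinely different route from the paper, and unfortunately the route has a gap at its central step.

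The paper's argument is direct and avoids any piece-by-piece assembly of the isomorphism conjecture: it shows that $\pi_1(M)$ has \emph{finite asymptotic dimension} (Lemma~\ref{asdimgraphmanifold}, which combines the Cartan--Hadamard bound for $\pi_1(M_i)$, Theorem~\ref{asdimLieLattice} for the lattices $\Gamma_i$, and the Bell--Dranishnikov Union Theorem~\ref{asdimUnion}) and that there is a finite model for $B\Gamma$ (since $M$ is a compact aspherical manifold), and then invokes Goldfarb's Theorem~3.11 of \cite{goldfarbweakcoherence}, which produces the assembly isomorphism for the \emph{whole} group $\Gamma=\pi_1(M)$ at once. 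There is no induction over the decomposition $\mathcal{G}$ at the level of assembly maps. You instead try to propagate the Farrell--Jones Conjecture up through the Bass--Serre graph of groups.

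The gap is your claim that ``applying the closure of FJC under fundamental groups of graphs of groups with FJC vertex and edge groups yields FJC for $\pi_1(M)$.'' This is not a closure property of FJC. The known inheritance results of Bartels--L\"uck--Reich cover subgroups, finite-index overgroups, directed colimits, finite products, and the extension result you cite (FJC for $Q$ plus FJC for preimages of virtually cyclic subgroups of $Q$). They do \emph{not} include amalgamated products and HNN extensions over arbitrary edge groups; indeed an amalgam such as $(\Z^2)*_{\Z}(\Z^2)\cong F_2\times\Z$ is not relatively hyperbolic with respect to the factors, so the relative-hyperbolicity mechanism you use for free products fails once the edge groups are infinite. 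To make the graph-of-groups step work one needs an additional geometric input --- for instance an \emph{acylindricity} hypothesis on the action of $\pi_1(M)$ on the Bass--Serre tree (as in Knopf's theorem) or a splitting theorem of Cappell/Waldhausen type with its attendant regular-coherence and square-root-closedness conditions, which is exactly what the Frigerio--Lafont--Sisto template and the second half of the paper supply. Your description of ``verifying FJC for the preimages in amalgams and HNN extensions along the edge groups'' conflates this with the extension-closure mechanism and does not identify the actual hypothesis that needs to be checked. If you want to argue along these lines, you must either prove that the wall groups $\pi_1(W_{i,j})$ are almost malnormal in the adjacent piece groups so that the tree action is acylindrical, or fall back on the Waldhausen/Cappell splitting machinery that the paper's Lemma~\ref{modifiedstrategy} and Lemmas~\ref{weakregcoherent}--\ref{squareroot} are designed to feed.

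Two smaller remarks: your asphericity step should cite Whitehead's theorem \cite{W39} rather than a ``Mayer--Vietoris argument'' (Mayer--Vietoris controls homology, not higher homotopy; $\pi_1$-injectivity plus Whitehead is what kills $\pi_k$ for $k\geq 2$, as in Lemma~\ref{asphericity}). And the trailing appeal to ``the coarse-geometric isomorphism results of Carlsson--Goldfarb'' is orphaned in your write-up: those results apply to the whole group $\Gamma$ once finite asymptotic dimension and a finite $K(\Gamma,1)$ are in hand, as in the paper's proof, and do not serve as a closure principle to be applied vertex by vertex.
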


The following section explains the notions of asymptotic dimension, weak regular coherence, and finite decomposition complexity. In the last section a proof of Theorem \ref{theoremborel} that uses these properties can be found, and also a proof that presents a slight extension of the general strategy proposed by Frigerio-Lafont-Sisto, and we verify it for the higher graph manifolds whose pieces are trivial bundles.

\subsection*{Acknowledgements}{\small
The  authors  thank Boris  Goldfarb  and  Jim Davis  for  conversations  related  to  this  paper. The  first  named  author  aknowledges  the  support  of  UNAM PAPIIT Grant  ID100315. The second named author has support from UNAM-PAPIIT and CONACyT research grants. The third named author thanks CONACyT Mexico and DGAPA-UNAM for supporting various research initiatives. 
}

\section{Finite  Asymptotic  Dimension and  Weak  Regular  Coherence  }

\subsection{Asphericity}
 Consider the next definition, following \cite{FLS}, which will be used later on:

\begin{Definition}
The boundaries of the pieces $Z_{i}$ that are identified together in Definition \ref{mfds} will be called the {\bf internal walls} of $M$.
\end{Definition}

Now we will prove, via an adaptation of the arguments of Frigerio-Lafont-Sisto, that the manifolds we are interested in are in fact aspherical.

\begin{Lemma}\label{asphericity} 
If $M$ is a manifold (possibly with boundary) constructed as in Definition \ref{mfds}, then $M$ is aspherical.
\end{Lemma}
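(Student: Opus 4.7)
The plan is to view $M$ as a graph of aspherical spaces with $\pi_1$-injective edge spaces and then invoke a standard gluing principle (Whitehead's theorem combined with the Bass--Serre tree, or equivalently the Scott--Wall theorem on graphs of aspherical spaces) to conclude that the universal cover of $M$ is contractible. Concretely, I would let $T$ be the graph dual to the decomposition of $M$ into pieces, with one vertex for each $Z_i$ and one edge for each internal wall, and build a classifying map $M \to \mathcal{Z}$ into the total space of a graph of aspherical $K(\pi,1)$'s; a five-lemma/Mayer--Vietoris argument on universal covers, together with the fact that the Bass--Serre tree is contractible, then shows that $\widetilde{M}$ is contractible.

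The first step is to verify that each piece $Z_i$ is aspherical. The base $M_i$ is a compact core of the complete, pinched negatively curved manifold $V_i$, and deformation retracts onto such a core; since $\widetilde{V_i}$ is a Hadamard manifold, $V_i$ and hence $M_i$ are aspherical. The fiber $N_i = \widetilde{N_i}/\Gamma_i$ is aspherical because $\widetilde{N_i}$ is a simply connected Lie group (diffeomorphic to a Euclidean space) and $\Gamma_i$ acts freely and properly. A fiber bundle with aspherical base and aspherical fiber has contractible universal cover, hence is aspherical, so each $Z_i$ is a $K(\pi_1(Z_i),1)$. The same argument applied fiberwise to the boundary shows that every internal wall $W$ of $M$ is itself aspherical, since it is an $N_i$-bundle over the horospherical cross-section of a cusp of $V_i$, and such cross-sections are aspherical infranilmanifolds (or, in the pinched negatively curved generality, still aspherical via their standard Lie-theoretic description).

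The crux is to prove that each internal wall $W \hookrightarrow Z_i$ is $\pi_1$-injective. For the base manifold $M_i$ this follows from the fact that horoballs lift to embedded horoballs in $\widetilde{V_i}$ (a consequence of CAT($-a^2$) geometry in pinched negative curvature), so the cusp cross-section injects into $\pi_1(M_i)$. One then uses the long exact sequence of the fibration $N_i \to Z_i \to M_i$ together with the corresponding sequence for the restricted fibration over the cross-section: the fiber inclusion is the identity on $\pi_1(N_i)$, and a diagram chase shows that $\pi_1(W) \to \pi_1(Z_i)$ is injective. This is the main technical obstacle, and it is exactly the point at which the argument of Frigerio--Lafont--Sisto needs to be adapted from the flat-fiber/hyperbolic-base setting to the more general setting of Definition \ref{mfds}. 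Once $\pi_1$-injectivity of every wall in every adjacent piece is established, one applies the graph-of-spaces asphericity theorem: build the Bass--Serre tree $T$ for the graph-of-groups decomposition $\pi_1(M) = \pi_1(\mathcal{G})$, observe that $\widetilde{M}$ fibers equivariantly over $T$ with fibers that are copies of the universal covers of the pieces glued along universal covers of the walls, and conclude contractibility of $\widetilde{M}$ from the contractibility of $T$ and of each piece/wall cover.
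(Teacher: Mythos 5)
Your proposal is correct and takes essentially the same route as the paper: verify that pieces and walls are aspherical via the fibration $N_i \to Z_i \to M_i$, establish $\pi_1$-injectivity of the wall inclusions, and then glue. The paper packages the final step as an induction on the number of internal walls using Whitehead's 1939 criterion rather than the explicit Bass--Serre/Scott--Wall formulation you use, but this is only a presentational difference; if anything, you supply more detail on the $\pi_1$-injectivity of walls, which the paper asserts without elaboration.
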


\begin{proof}
This proof is by induction on the number of internal walls $c$ of $M$. If $c=0$ then $M=Z$ for some bundle $Z$ over a closed, negatively curved base. It follows from the homotopy exact sequence for the bundle $Z$ that $M$ is aspherical in this case, establishing the base case for our inductive argument.

Assume $c>0$, and that the result holds for manifolds constructed as in Defintion \ref{mfds}, with strictly less than $c$ internal walls. Cut open $M$ along an arbitrary internal wall $W$. Our inductive hypothesis implies that now $M$ is obtained by gluing one or two (depending on whether $W$ separates $M$ or not) aspherical spaces. Since the inclusion of $W$ in the piece(s) in $M$ it belongs to is $\pi_{1}$--injective, it follows from a classical result of Whitehead \cite{W39} that $M$ is aspherical. \end{proof}

\subsection{Finite asymptotic dimension}

Let  $G$  be  a  finitely  presented  group.   Fix  a    finite  generator set  $S$  and  consider  the   word  metric $d_S$   induced  by  the  generating  set.  With  this  metric,  the   group  $G$ is  a  proper   metric  space. 
 
 \begin{Definition}
A  family $\{ U\}$  of   subsets  in a  metric  space  $X$ is  $D$--disjoint   if  $d(U, U^{'})>D$  for  all  subsets in  the  family.  The  asymptotic  dimension ${\rm asdim} \, X$   of  $X$ is  the   smallest  number  $n$  such  that  for  any  $D>0$  there  is  a  uniformly  bounded  cover  of  $X$  by  $n+1$-familes  of  $D$-disjoint  families  of  subsets. 
 \end{Definition}
 
 An example of spaces (and groups) for which their asymptotic dimension can be explicitly computed are precisely quotients of simply connected Lie groups:
 
 \begin{Theorem}\label{asdimLieLattice}(Carlsson-Goldfarb, Cor. 3.6 in \cite{goldfarbweakcohdeccomp})Let $\Gamma $ be a cocompact lattice in a connected Lie group $G$ with maximal compact subgroup $K$. Then ${\rm asdim} \, \Gamma = dim (G/K)$.  
\end{Theorem}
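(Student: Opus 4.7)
The plan is to reduce the statement to a computation on the homogeneous space $G/K$ via a Švarc--Milnor argument, and then to bound the asymptotic dimension of $G/K$ both above and below by $\dim(G/K)$.

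First I would equip $G/K$ with a $G$-invariant Riemannian metric, obtained by averaging an arbitrary Riemannian metric on $G/K$ over the compact group $K$. Let $n=\dim(G/K)$. The uniform lattice $\Gamma \leq G$ acts on $G/K$ by isometries. This action is properly discontinuous since $\Gamma$ is discrete in $G$ and $K$ is compact, and it is cocompact since $\Gamma \backslash G$ is compact and the map $\Gamma \backslash G \to \Gamma \backslash G/K$ has compact (in fact $K$-)fibers. The Švarc--Milnor lemma then shows that $\Gamma$ with any word metric is quasi-isometric to $G/K$ with its Riemannian metric. Asymptotic dimension being a quasi-isometry invariant, it is enough to prove $\mathrm{asdim}(G/K)=n$.

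For the lower bound, I would use the Cartan--Iwasawa--Malcev theorem, which guarantees that $G/K$ is diffeomorphic to $\mathbb{R}^n$, and combine this with a coarse volume-growth or cohomological argument: a contractible Riemannian $n$-manifold with cocompact isometry group supports a quasi-isometric embedding of Euclidean $\mathbb{R}^n$, and $\mathrm{asdim}(\mathbb{R}^n)=n$ together with monotonicity of asymptotic dimension under coarse embeddings yields $\mathrm{asdim}(G/K)\geq n$. For the upper bound, I would appeal to structure theory: combine the Levi--Malcev decomposition $G=L\ltimes R$, with $L$ semisimple and $R$ the radical, with the Iwasawa decomposition $L=K_L A N$ of the semisimple part. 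This presents $G/K$ as an iterated coarse fibration with fibers that are simply connected abelian or nilpotent Lie groups. Repeated application of the Bell--Dranishnikov Hurewicz-type inequality $\mathrm{asdim}(E) \leq \mathrm{asdim}(B) + \mathrm{asdim}(F)$, together with the classical fact that $\mathrm{asdim}$ of a simply connected nilpotent Lie group equals its dimension, gives $\mathrm{asdim}(G/K)\leq n$.

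The main obstacle, in my opinion, is making the upper-bound decomposition tight: one has to verify that the iterated Hurewicz-type estimates add up exactly to $n=\dim(G/K)$ without slack, which in turn requires that each stage of the Levi--Malcev/Iwasawa decomposition descends to a genuine coarse fibration on $G/K$ compatible with the Bell--Dranishnikov hypotheses. In the purely semisimple case this is cleaner, since $G/K$ is a CAT(0) symmetric space of covering dimension $n$ and one can invoke a direct equality between covering and asymptotic dimension for such spaces; the work lies in the mixed case where the radical is non-trivial, where one must carefully track the quasi-isometry types of the successive quotients.
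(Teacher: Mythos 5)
The paper states this result only as a citation (Carlsson--Goldfarb, Cor.~3.6 of \cite{goldfarbweakcohdeccomp}) without giving a proof, so your attempt is being measured against the literature rather than an in-text argument. Your \v{S}varc--Milnor reduction to showing $\mathrm{asdim}(G/K)=\dim(G/K)$ is correct, and the structure-theoretic upper bound via Levi--Malcev and Iwasawa decompositions together with the Bell--Dranishnikov Hurewicz-type inequality is in the right spirit (and close to what Carlsson--Goldfarb actually do). However, your lower bound contains a genuine error: it is \emph{false} that every contractible Riemannian $n$-manifold with cocompact isometry group admits a quasi-isometric, or even coarse, embedding of $\mathbb{R}^n$. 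Take $G=\mathrm{SL}_2(\mathbb{R})$, $K=\mathrm{SO}(2)$: then $G/K$ is the hyperbolic plane $\mathbb{H}^2$, which is Gromov hyperbolic, so any quasi-flat in it has rank at most one, and $\mathbb{R}^2$ does not coarsely embed into it. More generally, the top-dimensional flats in a symmetric space of noncompact type have dimension equal to the real rank of $G$, which is typically far smaller than $\dim(G/K)$; so the claimed embedding fails for essentially all semisimple $G$.

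The inequality $\mathrm{asdim}(G/K)\geq\dim(G/K)$ is nevertheless true, and there are standard ways to repair the argument. One route: pass to a torsion-free finite-index subgroup $\Gamma'\leq\Gamma$ (Selberg's lemma, which preserves $\mathrm{asdim}$); then $\Gamma'\backslash(G/K)$ is a closed aspherical $n$-manifold, giving $\mathrm{cd}(\Gamma')=n$, and one invokes Dranishnikov's inequality $\mathrm{asdim}(\Gamma')\geq\mathrm{cd}(\Gamma')$ for groups with a finite classifying space and finite asymptotic dimension (the latter supplied by your upper bound). Alternatively, one can use directly that a uniformly contractible $n$-manifold of bounded geometry has asymptotic dimension at least $n$. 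Without such a substitute, the lower half of your proof does not go through, even though the rest of the outline is sound.
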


 For spaces that are built up using smaller subsets, there is a theorem that allows us to bound the asymptotic dimension of the total space. Let $X$ be a metric space. The family $\{ X_{\alpha} \} $ of subsets of $X$ is said to satisfy the inequality ${\rm asdim} X_{\alpha} \leq n$ {\bf uniformly} if for every $r<\infty$ a constant $R$ can be found so that for every $\alpha$ there exists $R$--disjoint families $U_{\alpha}^{0}, U_{\alpha}^{1}, U_{\alpha}^{2}, \ldots , U_{\alpha}^{n}$ of $R$--bounded subsets of $X_{\alpha}$ covering $X_{\alpha}$. 
 
 \begin{Theorem}\label{asdimUnion}(Union theorem, Bell-Dranishnikov, Thm. 25 in \cite{BD})  Let $X=\bigcup\limits_{\alpha} X_{\alpha}$ be a metric space where the family $\{ X_{\alpha} \} $ satisfies the inequality ${\rm asdim} \, X_{\alpha} \leq n$ uniformly. Suppose further that for every $r$ there is a $Y_{r}\subset X$ with ${\rm asdim} \, Y_{r} \leq n$ so that $d(X_{\alpha} - Y_{r}, X_{\alpha'} - Y_{r} )\geq r$ whenever $X_{\alpha} \neq X_{\alpha'}$. Then ${\rm asdim} \, X \leq n$.
\end{Theorem}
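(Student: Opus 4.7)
The plan is to verify ${\rm asdim}\, X \leq n$ straight from the definition: for each fixed $D > 0$, I would exhibit a uniformly bounded cover of $X$ by $n+1$ families of $D$-disjoint subsets. The idea is to amalgamate two local structures, namely the $(n+1)$-fold covers inside each $X_\alpha$ provided by the uniform hypothesis on the family, and an $(n+1)$-fold cover of a neighborhood of $Y_r$ coming from ${\rm asdim}\, Y_r \leq n$, exploiting the $r$-separation between the complements $X_\alpha - Y_r$ to glue the two pieces together without spoiling disjointness.

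First I would fix $D$ and apply the uniform asymptotic dimension of $\{X_\alpha\}$ with disjointness at least $D$ to produce a constant $R$ and, for every $\alpha$, covers $\mathcal{U}_\alpha^0, \ldots, \mathcal{U}_\alpha^n$ of $X_\alpha$ by $D$-disjoint $R$-bounded families. Next I would choose $r$ much larger than both $R$ and $D$ and invoke the main hypothesis to extract $Y_r \subset X$ with ${\rm asdim}\, Y_r \leq n$ and $d(X_\alpha - Y_r, X_{\alpha'} - Y_r) \geq r$ whenever $X_\alpha \neq X_{\alpha'}$. Using the standard fact that passing to a bounded neighborhood preserves asymptotic dimension, the thickening $Y_r^+ := \{x \in X : d(x, Y_r) \leq D\}$ still satisfies ${\rm asdim}\, Y_r^+ \leq n$, so it admits $(n+1)$ families $\mathcal{V}^0, \ldots, \mathcal{V}^n$ of $D$-disjoint uniformly bounded subsets covering it.

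The candidate cover of $X$ would then be $\mathcal{W}^i := \mathcal{V}^i \cup \bigcup_\alpha \{U \setminus Y_r^+ : U \in \mathcal{U}_\alpha^i\}$ for $i = 0, \ldots, n$. Coverage is immediate, since every point of $X$ either lies in $Y_r^+$ or in some $X_\alpha - Y_r^+$; uniform boundedness is inherited from the two summands. The $D$-disjointness of each $\mathcal{W}^i$ splits into four cases: within $\mathcal{V}^i$ and within each trimmed $\mathcal{U}_\alpha^i$ it is automatic; across different $\alpha \neq \alpha'$ it follows from $d(X_\alpha - Y_r, X_{\alpha'} - Y_r) \geq r > D$; and between $\mathcal{V}^i$ and any trimmed $U \setminus Y_r^+$ it follows because points of the latter lie at distance strictly greater than $D$ from $Y_r$, which contains $\mathcal{V}^i$.

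The main obstacle is the careful bookkeeping of separation constants so that $D$-disjointness survives in all four regimes simultaneously. This is resolved by choosing $D$ first, then $R$ from the uniform asymptotic dimension hypothesis, then $r$ much larger than both, and finally producing the cover of the thickened subspace $Y_r^+$. A subsidiary point to verify is that the neighborhood thickening from $Y_r$ to $Y_r^+$ does not increase asymptotic dimension, which is why the standard relation ${\rm asdim}\, N_C(Y) = {\rm asdim}\, Y$ is needed.
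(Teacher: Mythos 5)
This theorem is quoted from Bell--Dranishnikov (Thm.~25 in \cite{BD}); the paper itself supplies no proof, so your argument can only be compared against the standard proof in the literature.

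Your overall plan --- combine the uniform covers of the $X_\alpha$ with a cover of a thickening of $Y_r$, using the $r$-separation of the sets $X_\alpha - Y_r$ to control distinct indices --- is the right starting point, but the final disjointness check, your case (d), contains a genuine gap. You write that points of $U\setminus Y_r^{+}$ lie at distance greater than $D$ from ``$Y_r$, which contains $\mathcal V^i$.'' That containment is false: by construction the families $\mathcal V^i$ cover $Y_r^{+}=N_D(Y_r)$, so a typical $V\in\mathcal V^i$ reaches out to distance $D$ from $Y_r$, and a point of some $U\setminus Y_r^{+}$ can sit at distance $D+\varepsilon$ from $Y_r$. The triangle inequality then only yields $d(V,\,U\setminus Y_r^{+})>0$, not $\ge D$. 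If you instead let $\mathcal V^i$ cover only $Y_r$ itself, the distance estimate works but coverage of $X$ fails, because points of $Y_r^{+}\setminus Y_r$ are removed from the $U$'s and not hit by the $\mathcal V^i$. Pushing the thickening and the trimming radius in tandem (replacing $D$ by $2D$, $3D$, $\dots$) just moves the same problem to a new shell; no choice of radii makes the naive ``trim and union'' construction $D$-disjoint across the interface.

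The standard fix, and the one Bell--Dranishnikov actually use, is a \emph{saturation} argument, essentially the mechanism behind their Finite Union Theorem. One takes the covers $\mathcal U_\alpha^i$ to be $r$-disjoint and $R$-bounded, chooses $\mathcal V^i$ for a suitable thickening of $Y_r$ with a much larger disjointness parameter (on the order of $2R+3r$), and then replaces each $V\in\mathcal V^i$ by its $r$-saturation $V\cup\bigcup\{U\in\mathcal U_\alpha^i : d(U,V)\le r\}$, discarding exactly those $U$'s that are within $r$ of some $V$; the $U$'s not discarded are then automatically $r$-far from every saturated $V$, and the large disjointness constant for $\mathcal V^i$ absorbs the growth in diameter. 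The $r$-separation hypothesis on the $X_\alpha - Y_r$ is then what keeps the surviving $U$'s from different $\alpha$'s apart. Your argument would need to be reorganized along these lines; as written, the gluing step does not produce $D$-disjoint families.
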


\begin{Lemma}\label{asdimgraphmanifold}
The  fundamental  group  $\pi_1(M)$ of  a manifold $M$ of dimension $n$ constructed as in Definition \ref{mfds} has  finite asymptotic  dimension. 
\end{Lemma}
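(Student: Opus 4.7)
The plan is to express $\pi_1(M)$ as the fundamental group of a finite graph of groups coming from the piece-and-wall decomposition of $M$, bound the asymptotic dimensions of the vertex and edge groups, and combine them via known permanence results.

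First, I would bound the asymptotic dimension of each vertex group $\pi_1(Z_i)$. The fiber bundle $N_i \to Z_i \to M_i$ yields the short exact sequence
\[ 1 \longrightarrow \Gamma_i \longrightarrow \pi_1(Z_i) \longrightarrow \pi_1(M_i) \longrightarrow 1. \]
The kernel $\Gamma_i$ is a uniform lattice in a simply connected Lie group and hence has finite asymptotic dimension by Theorem~\ref{asdimLieLattice}. For the quotient, $\pi_1(M_i) = \pi_1(V_i)$ is the fundamental group of a complete finite-volume pinched negatively curved manifold, so it is hyperbolic relative to its cusp subgroups, and the latter are finitely generated virtually nilpotent groups by the Margulis lemma. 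By Osin's theorem on the asymptotic dimension of relatively hyperbolic groups, ${\rm asdim}\, \pi_1(V_i) < \infty$. The Bell--Dranishnikov extension theorem for asymptotic dimension then bounds ${\rm asdim}\, \pi_1(Z_i)$.

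Second, I would handle the edge groups. Each internal wall $W \subset Z_i$ is a fiber bundle with fiber $N_i$ over a horospherical cross-section of a cusp of $V_i$, so $\pi_1(W)$ fits in an extension of a finitely generated virtually nilpotent group by $\Gamma_i$, and another application of the extension theorem gives it finite asymptotic dimension. The $\pi_1$-injectivity of the walls (already invoked in the proof of Lemma~\ref{asphericity}) together with Seifert--van Kampen identifies $\pi_1(M)$ with the fundamental group of a finite graph of groups whose vertex groups are the $\pi_1(Z_i)$'s and whose edge groups are the $\pi_1(W_j)$'s.

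Third, I would conclude by invoking the permanence of finite asymptotic dimension under fundamental groups of finite graphs of groups with finite asymptotic dimension vertex and edge groups, again due to Bell--Dranishnikov.

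The hardest step is bounding ${\rm asdim}\, \pi_1(V_i)$: this rests on recognising $\pi_1(V_i)$ as relatively hyperbolic with virtually nilpotent peripherals (a consequence of pinched negative curvature and finite volume) and applying Osin's theorem. A secondary technicality is verifying the mild finite-generation and quasi-isometric embedding hypotheses needed for the extension and graph-of-groups permanence results, which are automatic in our cocompact setting. An alternative route is to apply Theorem~\ref{asdimUnion} directly to the universal cover $\widetilde M$ cut along lifts of internal walls, using that each chamber equivariantly covers some $Z_i$ with uniformly bounded asymptotic dimension and that the dual tree supplies the required separation at arbitrary scales; this keeps the argument metric rather than combinatorial, but is otherwise interchangeable with the graph-of-groups approach.
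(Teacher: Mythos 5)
Your outline follows the paper's high-level plan — bound the asymptotic dimension of each piece via the fibration exact sequence, using Theorem~\ref{asdimLieLattice} for the fiber groups, and then combine the pieces — but you execute two of the key steps more carefully, and in ways that genuinely differ from (and improve on) the paper's sketch. For the base groups $\pi_1(M_i)=\pi_1(V_i)$, the paper simply asserts that their asymptotic dimension ``equals $n-n_i$ by the Cartan--Hadamard Theorem''; this is unconvincing (the dimension count is off, and more importantly $V_i$ is non-compact of finite volume, so the group does not act cocompactly on the contractible universal cover, while $\widetilde{M_i}$ is a horoball complement whose asymptotic dimension is not controlled by Cartan--Hadamard alone). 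Your route via relative hyperbolicity with virtually nilpotent peripherals and Osin's theorem is the correct way to get finiteness here, and is a genuine replacement rather than a restatement. Similarly, for the assembly step the paper just invokes the Union Theorem (Theorem~\ref{asdimUnion}) without saying what decomposition of $\pi_1(M)$ it is applied to; you make the combinatorics explicit by passing to the graph-of-groups/Bass--Serre structure, controlling edge groups as well, and appealing to the Bell--Dranishnikov permanence theorem for fundamental groups of graphs of groups (with the metric Union Theorem on the cut-open universal cover noted as an equivalent alternative). In short, your proof is correct and supplies the extension-theorem and permanence steps that the paper leaves implicit, while the paper's version is shorter but has real gaps at exactly those two points.
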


\begin{proof}
 The  fundamental  groups  of the pieces  $\pi_1(Z_i)$  fit  in  an  exact  sequence 
 $$ 1\to \pi_1(N_{i})\to \pi_1( Z_i) \to \pi_1(M_i) \to 1.  $$
 The    asymptotic  dimension  of  $\pi_1(M_i)$ equals $n-n_i$ by  the  Cartan-Hadamard  Theorem. On  the  other  hand,  the  asymptotic  dimension  of   the fibres, which are quotients of Lie groups $G$ under the action of a uniform lattice, equals  ${\rm dim} (G/K)<\infty$  by  Theorem \ref{asdimLieLattice}.   
 
Finally, we invoke Theorem  \ref{asdimUnion},  from which we  conclude  that  the  asymptotic  dimension  of  $\pi_1(X)$  is  finite.
\end{proof}

\subsection{Finite decomposition complexity}

We will briefly define the notion of {\em straight finite decomposition complexity}, since we use it as a key property in the proof of the main result presented below. 

Let $\mathcal{X}$ and $\mathcal{Y}$ be two families of metric spaces, and $R>0$. The family $\mathcal{X}$ is called $R$--decomposable over $\mathcal{Y}$ if, for any space $X$ in $\mathcal{X}$ there are collections of subsets $\{U_{1,\alpha} \} $ and $\{U_{2,\beta} \} $ such that 
$$X=\bigcup\limits_{i=1,2, \gamma=\alpha, \beta} U_{i,\gamma}.$$
Each $U_{i,\gamma}$ is a member of the family $\mathcal{Y}$, and each of the collections $\{U_{1,\alpha} \}$ and $\{U_{2,\beta} \}$ is $R$--disjoint. A family of metric spaces is called {\em bounded } if there is a uniform bound on the diameters of the spaces in the family.

\begin{Definition}\label{sFDC}
A metric space $X$ has {\em straight finite decomposition complexity} if, for any sequence $R_1 \leq R_2 \leq \ldots $ of positive numbers, there exists a finite sequence of metric families $V_1, V_2, \dots, V_{n}$ such that $X$ is $R_1$--decomposable over $V_1$, $X$ is $R_2$--decomposable over $V_2$, etc, and the family $V_{n}$ is bounded.
\end{Definition}

The following well known lemma ties this notion with that of asymptotic dimension, for completeness we include a proof:

\begin{Lemma}\label{asdimsFDC}
If a group has finite asymptotic dimension, then it has straight finite decomposition complexity.
\end{Lemma}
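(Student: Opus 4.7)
I would proceed by induction on $n = \operatorname{asdim}(G)$, peeling off one asymptotically zero-dimensional layer of $G$ at each step of the sFDC decomposition. The base case $n=0$ is immediate: for any scale $R_1$ the group $G$ is itself an $R_1$-disjoint union of uniformly bounded pieces $\{G_\alpha\}$ by the definition of asymptotic dimension zero, and partitioning the index set arbitrarily into two subfamilies yields $G = U_1 \cup U_2$ with both $U_j$ an $R_1$-disjoint union of members of the bounded family $V_1 := \{G_\alpha\}$, so sFDC holds in a single step.

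For the inductive step, suppose the lemma holds for asymptotic dimension strictly less than $n$ and let $\operatorname{asdim}(G) \leq n$. Given a sequence $R_1 \leq R_2 \leq \cdots$, apply the asdim definition at scale $R_1$ to obtain $G = X_0 \cup X_1 \cup \cdots \cup X_n$ where each $X_i = \bigsqcup_\alpha X_i^\alpha$ is an $R_1$-disjoint union of $D(R_1)$-bounded sets. Set $A := X_0$ and $B := X_1 \cup \cdots \cup X_n$. Then $G = A \cup B$ is an $R_1$-decomposition over the family $V_1 := \{X_0^\alpha\}_\alpha \cup \{B\}$: the part $A$ is realised as the required $R_1$-disjoint union of members of $V_1$, while $B$ appears as a trivial singleton \emph{union} in $V_1$. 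The bounded sets $X_0^\alpha$ require no further work at later scales, admitting trivial $R_k$-decompositions into themselves. The substantive task is to continue the decomposition of $B$ at the remaining scales $R_2, R_3, \ldots$, which I would do by applying the inductive hypothesis to $B$: by construction, $B$ is covered by the $n$ families $X_1, \ldots, X_n$ of $R_1$-disjoint uniformly bounded sets, a structural witness of ``effective'' asymptotic dimension at most $n-1$.

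The main obstacle is that a metric subspace does not automatically inherit a lower asymptotic dimension: the set $B$ carries the witness of $\operatorname{asdim} \leq n-1$ only at the single scale $R_1$, not for arbitrary larger scales. To close the induction rigorously one would strengthen the inductive statement to track the structural decomposition of a metric family into $n+1$ asymptotically zero-dimensional pieces alongside the bare asymptotic dimension bound — a bookkeeping subtlety rather than a conceptual difficulty. An alternative route, bypassing this issue entirely, is to invoke the theorem of Guentner, Tessera and Yu asserting that finite asymptotic dimension implies finite decomposition complexity, which in turn is strictly stronger than the straight version used in Definition \ref{sFDC}.
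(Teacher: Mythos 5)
Your direct inductive attempt has a gap that you correctly spot but then underestimate. The residual set $B = X_1 \cup \cdots \cup X_n$ is a subset of a space of asymptotic dimension $n$, and as such need only have asymptotic dimension at most $n$ as a subspace: the $n$ covering families at scale $R_1$ give no control at scale $R_2 > R_1$, where you would reapply the definition of asymptotic dimension and recover $n+1$ families, not $n$. The ``strengthened induction'' you gesture at would require a lemma to the effect that a space of asymptotic dimension at most $n$ is, for every $r$, $r$-decomposable over a metric family that is \emph{uniformly} of asymptotic dimension at most $n-1$. That is a genuine theorem --- essentially the inductive core of the Guentner--Tessera--Yu argument in \cite{GTY} --- and not a bookkeeping matter; your induction cannot close without it.

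Your fallback, however, is exactly the paper's two-sentence proof: the authors cite \cite{GTY} for the implication from finite asymptotic dimension to finite decomposition complexity, and Dranishnikov--Zarichnyi \cite{DZ} for the implication from finite decomposition complexity to straight finite decomposition complexity. One small caution: you describe FDC as ``strictly stronger'' than sFDC, but only the implication FDC $\Rightarrow$ sFDC is established in the cited literature; strictness is an additional claim you do not need and should not assert without a reference.
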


\begin{proof}
It was shown by Guentner-Tessera-Yu that a countable metric space of finite asymptotic dimension has finite decomposition complexity in \cite{GTY}. As part of their study of straight finite decomposition complexity, Dranishnikov-Zarichnyi showed in \cite{DZ} that groups with finite decomposition complexity have stright finite decomposition complexity.
\end{proof}

\section{Two proofs}

\subsection{The Carlsson-Goldfarb approach to the Borel conjecture}

Let $\Gamma$  be  the  fundamental  group  of  a  manifold constructed as in Defintion \ref{mfds}. The  strategy  for  proving  Theorem \ref{theoremborel} for  manifolds  with  fundamental  group $\Gamma$  consists  of  showing that $\Gamma$ satisfies the following properties: 

\begin{enumerate}
\item   $\Gamma$  has  finite  asymptotic  dimension. 
\item  $\Gamma$  has  a finite  model  for  the  classifying  space $B\Gamma$. \end{enumerate}

A  group  satisfying  these two  conditions has been proven  to  also satisfy the integral    isomorphism  conjecture in  algebraic  K-theory, according to Theorem 3.11 of Goldfarb in \cite{goldfarbweakcoherence}.

\begin{proof}(of Theorem \ref{theoremborel})

Item $(1)$ was shown in  Lemma \ref{asdimgraphmanifold} above.

 Item  $(2)$  follows  from the  fact that these are fundamental groups of  compact aspherical manifolds (possibly with boundary)
 Therefore the Borel conjecture holds for the manifolds in Definition \ref{mfds}. \end{proof}

This simple strategy provides an alternative to the one layed out by Frigerio-Lafont-Sisto in \cite{FLS}. We also present in the following a modified version of their strategy, and verify that it can be carried out for certain manifolds within those of Definition \ref{mfds}.

In a  series  of   articles, B.  Goldfarb  and  G. Carlsson  have  investigated  several  notions  which  generalize  that  of  regular  coherence  for  the  group  ring of  infinite  groups.  The  main  geometric  interest   on  this  situation  resides  on  the  fact  that  these   conditions   are  strong  enough  to    allow  the  vanishing  of  the  Whitehead  group  and  negative  algebraic  K--theory groups  of   group  rings, and  weak  enough  to  be  handled  with  methods dealing with  coarse versions  of  the  isomorphism   conjecture  in   algebraic  $K$--theory \cite{goldfarbweakcoherence},\cite{goldfarbweakcohdeccomp}. 

We  will  recall  some   definitions  and  fundamental  results related  to   finite  asymptotic  dimension  and the  coarse assembly  map  in  the  boundedly controlled  setting. See, for example,  \cite{carlsongoldfarb} and \cite{goldfarbweakcoherence} for  further  reference.

Let  $\mathcal{P} (G)$ be  the   power  set  viewed  as  a  category  where  morphisms  are  inclusions  of  subsets.  Let  $R$  be  a  noetherian ring  and  consider  a  finitely  generated $R[G]$--module  $\mathcal{M}$.    A $G$--filtration of  $\mathcal{M}$  is  a  functor $f:\mathcal{P}(G)\to R-Sub(\mathcal{M}) $  to  the  category  of   $R$--submodules  of  $\mathcal{M}$ such  that $f(G)=\mathcal{M}$,  and  each bounded  set  in  the   word  metric  $d_S$ , $T\subset G$ is  mapped  to  a  finitely  generated  $R$--submodule.  Such  a  functor  $f  $   is  equivariant if  $f(g S)=gf(S)$

\begin{Definition}
A homomorphism $\phi: F_1\to  F_2$  between finitely  generated $R[G]$--modules with  fixed  filtrations  $f_1$, $f_2$  is   boundedly  controlled  with  respect  to  the  bound  $D>0$ if $\phi(f_1(S))\subset f_2(B_D(S))$  for  each subset  $S\subset G$.  If  $\phi$  also satisfies $\phi F_1\cap f_2(S) \subset \phi F_1(B_D(S))$,  then  $F$ is  called  boundedly  bicontrolled.     
\end{Definition}

\begin{Definition}
Let  $\mathcal{M}$  be a finitely  presented  $R[G]$--module. A  finite  presentation  $F:R[G]^{m}\to R[G]^{n}\to \mathcal{M}$  is  admissible  if  the  homomorphism $F$  is  boundedly  bicontrolled.  
\end{Definition}

\begin{Definition}
A  group ring  $R[G]$  is  weakly  coherent  if  every $R[G]$--module  with an  admissible  presentation  has  a  projective resolution of  finite  type. Similarly, a  group  ring  is  weakly  regular  coherent  if  every $R[G]$--module  with  an admissible  presentation  has  finite  homological  dimension.  
\end{Definition}

\begin{Theorem}\label{TheoremFADweakcoherence}
(Carlsson-Goldfarb, Corollary  3.9  in \cite{goldfarbweakcoherence})
Let  $R$ be  a  noetherian  ring  and  let  $G$  be  a  group  of  finite  asymptotic  dimension. Then, the  group  ring $R[G]$ is  weakly  regular  coherent. 

\end{Theorem}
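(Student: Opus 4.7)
The plan is to build a projective resolution of finite length for any $R[G]$-module $\mathcal{M}$ carrying an admissible presentation $F\co R[G]^{m}\to R[G]^{n}\to \mathcal{M}$, using the covering behaviour supplied by finite asymptotic dimension to control both the size and the length of the resolution. First I would recast the problem inside the category of boundedly controlled $R[G]$-modules: the filtration data on $R[G]^m$ and $R[G]^n$ determine, for each finite subset $S\subset G$, finitely generated $R$-submodules $f_i(S)$, and the bounded bicontrol of $F$ says that information propagates only up to a fixed distance $D$ in the word metric $d_S$. The aim is to iteratively resolve $\ker F$ by finitely generated free $R[G]$-modules whose filtrations are only allowed to spread by a controlled amount at each step.

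The core step uses finite asymptotic dimension as follows. Let $n=\operatorname{asdim} G$. For any control distance $D>0$ arising at a given stage, pick a uniformly bounded cover of $G$ by $n+1$ families $\{U^0_\alpha\},\dots,\{U^n_\alpha\}$ of $D$-disjoint subsets. Restrict the kernel submodule $\ker F \cap f_1(S)$ to the pieces of this cover. On each piece $U^k_\alpha$, the intersection is finitely generated over $R$ by the admissibility hypothesis, so a finitely generated free $R[G]$-summand maps onto it; because the $U^k_\alpha$ are $D$-disjoint and we chose $D$ strictly larger than the control of $F$, these summands assemble without overlap into a single boundedly bicontrolled free resolution of $\ker F$ by $n+1$ layers. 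This is the geometric analogue of the classical fact that covering dimension $n$ forces \v{C}ech cohomological dimension at most $n$.

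Then I would iterate, feeding back the kernel of each successive resolving map and resolving it again by the same cover-decomposition trick, each time with a larger bound $D_j$ to accommodate the deteriorated control. The key structural claim is that after at most $n+1$ iterations the remaining kernel becomes projective (indeed, can be taken free of controlled rank), so the process halts and yields a finite projective resolution of $\mathcal{M}$. At this point the noetherian hypothesis on $R$ enters: it ensures that at every stage the submodules $f_j(S)$ attached to bounded $S$ remain finitely generated, so the whole construction stays inside the category of finitely generated boundedly controlled modules.

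The main obstacle is exactly this last inductive step, namely showing termination after $n+1$ stages. One has to verify that the iterated kernels, which a priori could have progressively worse control constants, are nevertheless boundedly bicontrolled in a uniform sense and that the geometric decomposition actually splits them, rather than only covering them set-theoretically; this is where a Mayer--Vietoris argument in the bounded category, together with a Schanuel-type comparison of resolutions, has to be invoked. Once that is in place, $R[G]$-modules with admissible presentations acquire projective dimension bounded by $n+1$, which is the definition of weak regular coherence.
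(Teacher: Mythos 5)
This result is not proved in the paper: it is quoted verbatim as Corollary~3.9 of Goldfarb's \emph{Weak coherence and the $K$--Theory of groups with finite decomposition complexity} (arXiv:1307.5345, attributed there to Carlsson--Goldfarb), so there is no internal proof to compare your argument against. The paper simply invokes the citation as a black box.

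On its own merits, your sketch does capture the broad strategy of the Carlsson--Goldfarb program --- boundedly controlled modules over the metric space $(G,d_S)$, covers supplied by $\operatorname{asdim} G = n$, iterative resolution of kernels, and a final appeal to controlled excision --- but it contains a genuine gap that you yourself flag. Two issues in particular. First, after choosing $n+1$ families $\{U^k_\alpha\}$ of $D$-disjoint subsets, your claim that the local free summands ``assemble without overlap into a single boundedly bicontrolled free resolution of $\ker F$'' is not correct as stated: the $D$-disjointness holds \emph{within} each family $U^k$, not between different families $U^k$ and $U^{k'}$, so there are unavoidable overlaps, and reconciling the local resolutions across these overlaps is precisely where a controlled Mayer--Vietoris/excision theorem has to do real work. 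Second, the termination after finitely many stages is not a formal consequence of the cover data; one must show that the iterated kernels remain admissibly presented with uniformly manageable control constants and that the excision sequence actually splits off a projective at the top, which in the Carlsson--Goldfarb framework requires their specific machinery of filtered submodules of free modules and a delicate bookkeeping of support expansion. Your proposal names the needed tools (``Mayer--Vietoris in the bounded category, a Schanuel-type comparison'') but does not carry them out, so what you have is a correct identification of the proof architecture rather than a proof. For the purposes of this paper, the appropriate move is simply to cite Goldfarb as the authors do.
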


Weak  regular  coherence  has   been  verified  to  be  enough  to  guarantee   the  vanishing  of  Whitehead  groups  and  negative   algebraic  $K$--theory.

\begin{Theorem} \label{TheoremFADAssembly}
(Goldfarb, Theorem  3.11  in  \cite{goldfarbweakcoherence}).
Let $G$  be  a  group  of  finite  asymptotic  dimension (or  more  generally of  finite decomposition  complexity,  as  explained  in  \cite{goldfarbweakcoherence}). Assume  that  there  is  a  finite   model  for  the  classifying  space  $K( G, 1)$.  Then,   the  assembly  map  in  algebraic  $K$--theory is  an  isomorphism.  In  particular,  the  Whitehead group  of  $G$ vanishes. 
 \end{Theorem}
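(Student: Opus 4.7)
The plan is to identify the classical assembly map $H^G_*(EG; K_R) \to K_*(R[G])$ with a forget-control map between boundedly controlled algebraic $K$-theory spectra, in the spirit of Pedersen-Weibel and Carlsson-Pedersen. Since $K(G,1)$ admits a finite model, $G$ acts freely and cocompactly on a finite-dimensional contractible complex, which lets one identify $K_*(R[G])$ with the $K$-theory of finitely generated $R$-modules geometrically controlled over $(G, d_S)$. Theorem \ref{TheoremFADweakcoherence} is the crucial input here: it ensures that admissibly presented modules have finite-type projective resolutions, so that bounded $K$-theory actually computes the target and the forget-control description of assembly is available rather than being a spurious identification.

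Next I would exploit finite asymptotic dimension. Given $\operatorname{asdim} G = n$ and any scale $D > 0$, one obtains a uniformly bounded cover of $G$ by $n+1$ families of $D$-disjoint subsets. Feeding this data into a bounded Mayer-Vietoris argument decomposes the controlled $K$-theory spectrum over $G$ into pieces supported on uniformly bounded subsets, and on each such piece the forget-control map is tautologically an equivalence because there is no nontrivial control condition on a bounded set. Induction on the number of disjoint families, organized via a spectrum-level incarnation of the union theorem (Theorem \ref{asdimUnion}), then propagates the equivalence across all of $G$.

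The main obstacle, and the place where the dimension hypothesis is genuinely consumed, is the descent principle itself: one must show that the fiber of the forget-control map vanishes, not merely that it restricts to an equivalence on bounded pieces. This fiber can be modeled as the continuously controlled $K$-theory of a corona compactification of $G$, and finite asymptotic dimension is used to produce a finite-dimensional cover of the corona whose nerve has vanishing reduced $K$-homology, by induction on the covering dimension. The extension to groups of finite decomposition complexity replaces this single-step induction by a transfinite one across the decomposition tree. Once the assembly map is an isomorphism in all degrees, the vanishing of $\operatorname{Wh}(G)$ and of negative algebraic $K$-theory for $R=\mathbb{Z}$ follows by tracking the image of the assembly map in degrees $\leq 1$ against the trivial-unit contribution $\pm G$, which is the classical argument.
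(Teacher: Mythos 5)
This statement is quoted verbatim from Goldfarb's Theorem~3.11 in \cite{goldfarbweakcoherence}; the present paper does not reprove it, so there is no internal proof against which to compare your attempt. Taken on its own terms, your sketch is a plausible high-level reconstruction of the Carlsson--Goldfarb descent strategy: bounded $K$-theory of $G$ with its word metric, identification of the assembly map with a forget-control map, a Mayer--Vietoris induction powered by the bounded covers that finite asymptotic dimension supplies, and a more elaborate induction to handle finite decomposition complexity. You are also right that the weak regular coherence of $R[G]$ (Theorem~\ref{TheoremFADweakcoherence}) is the one genuinely non-formal input that makes the bounded-excision maneuvers land in the correct $K$-theory rather than a variant of it.

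A few caveats. Your phrasing conflates weak coherence (finite-type projective resolutions for admissibly presented modules) with weak \emph{regular} coherence (finite homological dimension); Goldfarb's argument uses the latter. Your model of the fiber of forget-control as continuously controlled $K$-theory of a corona compactification is a Carlsson--Pedersen device, and while it is in the same family of ideas, Goldfarb's argument in \cite{goldfarbweakcoherence} is organized around controlled categories over the metric space $G$ itself rather than a boundary compactification, so you should not attribute that specific mechanism to the cited source without checking. Finally, the vanishing of $Wh(G)$ is immediate once the assembly map is an isomorphism: a finite model for $K(G,1)$ forces $G$ to be torsion-free, and for torsion-free $G$ the Whitehead group is by definition the cokernel of the assembly map on $K_1$; your ``tracking trivial units'' phrasing is essentially this, just said less directly.
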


As a consequence of theorem \ref{TheoremFADAssembly} and lemma \ref{asdimgraphmanifold}, we obtain: 
\begin{Corollary}
The  group  ring  $\mathbb{Z} \pi_1(M)$ of a manifold $M$ constructed as in Definition \ref{mfds}  is  weakly  regular  coherent. 
\end{Corollary}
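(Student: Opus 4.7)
The plan is to invoke Theorem \ref{TheoremFADweakcoherence} (Carlsson--Goldfarb) directly, applied with the coefficient ring $R=\mathbb{Z}$ and the group $G=\pi_1(M)$. Both hypotheses of that theorem have already been established earlier in the paper: the ring $\mathbb{Z}$ is a PID and hence noetherian, while Lemma \ref{asdimgraphmanifold} shows that the fundamental group of any manifold built as in Definition \ref{mfds} has finite asymptotic dimension. Substituting these choices into the conclusion of Theorem \ref{TheoremFADweakcoherence} immediately yields that $\mathbb{Z}[\pi_1(M)]$ is weakly regular coherent, which is exactly the statement of the corollary.

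All of the substantive geometric content has already been absorbed into Lemma \ref{asdimgraphmanifold}, which in turn relied on the Union Theorem of Bell--Dranishnikov (Theorem \ref{asdimUnion}) together with the Carlsson--Goldfarb identification of the asymptotic dimension of a cocompact lattice in a connected Lie group (Theorem \ref{asdimLieLattice}), and the Cartan--Hadamard bound for the bases $\pi_1(M_i)$. Consequently, the proof of the corollary itself reduces to a one-line application of the quoted black box; there is no real obstacle to overcome at this stage. The corollary functions as a bookkeeping step that records the algebraic consequence of the finiteness of ${\rm asdim}\,\pi_1(M)$, and it is precisely this consequence that feeds, via Theorem \ref{TheoremFADAssembly}, into the vanishing of Whitehead and negative $K$-theory groups underlying the proof of Theorem \ref{theoremborel}.
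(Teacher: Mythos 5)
Your proof is correct and follows the same route the paper intends: combine Lemma \ref{asdimgraphmanifold} (finite asymptotic dimension of $\pi_1(M)$) with the Carlsson--Goldfarb result, Theorem \ref{TheoremFADweakcoherence}, for the noetherian ring $R=\mathbb{Z}$. In fact, the paper's own introductory sentence to this corollary mis-cites Theorem \ref{TheoremFADAssembly} where it should cite Theorem \ref{TheoremFADweakcoherence}, so your reference is the accurate one.
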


\subsection{An extension of the Frigerio-Lafont-Sisto approach to the Borel conjecture}

The proof of the Borel conjecture for the class of manifolds studied by Frigerio-Lafont-Sisto in \cite{FLS} in fact developed a general strategy to be carried out for a given family of manifolds. In their Theorem 3.1 they proved that if a manifold is built up from a geometric decomposition, as are the higher graph manifolds in this paper, and  satisfies the following six conditions, then it  also satisfies the Borel conjecture:

\begin{enumerate}\label{frigeriolafontsisto}
\item Each of the inclusions $W_{i,j}\to Z_i$ is $\pi_1$--injective.
\item Each of the pieces ${Z_i}$ and each of the walls $W_{i,j}$ are aspherical.
\item Each of the pieces ${Z_i}$ and each of the walls $W_{i,j}$ satisfy the Borel Conjecture.
\item The rings ${\bf Z}\pi_1(W_{i,j})$ are all regular coherent.
\item $Wh_{k}({\bf Z}\pi_1(W_{i,j})) = 0 = Wh_{k}({\bf Z}\pi_1(Z_i))$ for $k \leq 1$.
\item Each of the inclusions $\pi_1(W_{i,j})\to \pi_1(Z_i)$ is square-root-closed.

\end{enumerate}

We propose a slightly modified version of this strategy, where we replace the last three conditions, (4), (5) and (6), by a couple of new requirements. So that we obtain the following:

\begin{Lemma}\label{modifiedstrategy}
Let $M$ be a compact manifold of dimension $n\geq 6$ with a topological decomposition (as described in \cite{FLS}).  Assume  the
following conditions hold:
\begin{enumerate}\label{frigeriolafontsisto}
\item Each of the inclusions $W_{i,j}\to Z_i$ is $\pi_1$--injective.
\item Each of the pieces ${Z_i}$ and each of the walls $W_{i,j}$ are aspherical.
\item Each of the pieces ${Z_i}$ and each of the walls $W_{i,j}$ satisfy the Borel Conjecture.
\item The group $\Gamma=\pi_1(M)$ has finite decomposition complexity.
\item There exists a finite model for the classifying space $K(\Gamma, 1)$. 
\end{enumerate}

Then the manifold $M$ also satisfies the Borel conjecture. 
\end{Lemma}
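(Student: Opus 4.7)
The plan is to follow the Frigerio-Lafont-Sisto architecture of \cite{FLS} while replacing their hypotheses (4)--(6) by the consequences of our hypotheses (4) and (5). First, I would observe that our conditions (1), (2), (3) are verbatim the first three of \cite{FLS}, so the combinatorial structure of $\Gamma=\pi_1(M)$ as an iterated graph of groups — with vertex groups $\pi_1(Z_i)$ amalgamated along edge groups $\pi_1(W_{i,j})$ — is available, and the Bass-Serre/Mayer-Vietoris setup of \cite{FLS} applies. In particular, Lemma \ref{asphericity} (whose proof only used $\pi_1$-injectivity and asphericity of the pieces and walls) shows $M$ is aspherical, so the Borel conjecture for $M$ reduces to the vanishing of the topological structure set $\mathcal{S}^{\mathrm{top}}(M)$.

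Next I would check that the Whitehead-type vanishing used in \cite{FLS} is now automatic. Finite decomposition complexity is inherited by subgroups, so condition (4) ensures that every $\pi_1(Z_i)$ and every $\pi_1(W_{i,j})$ has finite decomposition complexity. Since the pieces and walls are compact aspherical manifolds (possibly with boundary), each such subgroup admits a finite model for its classifying space; together with (5) this means Theorem \ref{TheoremFADAssembly} applies uniformly to $\Gamma$ and to each vertex and edge group. Hence the assembly map in algebraic $K$-theory is an isomorphism for $\mathbb{Z}\Gamma$, $\mathbb{Z}\pi_1(Z_i)$, and $\mathbb{Z}\pi_1(W_{i,j})$, yielding vanishing of $\mathrm{Wh}$ and of $\widetilde K_0$ and negative $K$-theory for all these rings. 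Together with Theorem \ref{TheoremFADweakcoherence}, this subsumes FLS conditions (4) and (5).

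The main obstacle is bypassing the square-root-closed condition (6) of \cite{FLS}, which they use to force the $\mathrm{UNil}$ contributions in Cappell's splitting theorem to vanish so that their inductive wall-by-wall splitting is unobstructed. My approach is to avoid the inductive splitting altogether: feed the vanishing of $\mathrm{Wh}(\Gamma)$ and $\widetilde K_0(\mathbb{Z}\Gamma)$ and the analogous $L$-theoretic vanishing (obtainable either from the $L$-theoretic companion of the Carlsson-Goldfarb assembly results or, at the level of vertex and edge groups, from (3) combined with vanishing of $\mathrm{UNil}$ under the vanishing of $\mathrm{Wh}$ on all square-root-closed refinements) directly into the surgery exact sequence of $M$. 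Combining this with the Borel conjecture for the pieces and walls from (3), an aspherical-manifold Mayer-Vietoris argument in the structure set shows $\mathcal{S}^{\mathrm{top}}(M)$ is trivial, so any homotopy equivalence $f\co M\to M'$ is homotopic to a homeomorphism. The delicate point I expect to need the most care is matching the $L$-theory input to the $K$-theory input supplied by Theorem \ref{TheoremFADAssembly}, and checking that it is strong enough to handle amalgamations where no square-root-closed hypothesis is imposed.
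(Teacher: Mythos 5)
Your proposal follows the same overall scheme as the paper's proof: derive Whitehead-type vanishing from conditions (4) and (5) via Theorem \ref{TheoremFADAssembly}, and then run the Frigerio--Lafont--Sisto machinery of their Theorem 3.1. Where you stall, and correctly flag as the delicate point, is precisely where the paper's own (one-sentence) proof is silent. The FLS argument makes essential $L$-theoretic use of their square-root-closed hypothesis (6): Cappell's splitting theorem produces $\mathrm{UNil}$ obstructions, and square-root-closedness is what forces them to vanish. Conditions (4) and (5) of Lemma~\ref{modifiedstrategy} yield $K$-theoretic vanishing ($Wh_k(\mathbf{Z}\Gamma)=0$ for $k\leq 1$) via Theorem~\ref{TheoremFADAssembly}, but that theorem as stated is a $K$-theory assembly isomorphism and does not by itself dispose of $\mathrm{UNil}$. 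Your instinct to reach for an $L$-theoretic companion of the Carlsson--Goldfarb result, or a Mayer--Vietoris argument in structure sets, is the right one, but you neither cite nor derive such a result, so your write-up does not actually close the gap. For comparison, the paper's proof is the bare assertion that ``the rest of the proof presented in Theorem 3.1 [FLS] follows through,'' which elides the same issue; and indeed the paper's final theorem still invokes Lemma~\ref{squareroot} (square-root-closedness) even though Lemma~\ref{modifiedstrategy} does not list square-root-closedness among its hypotheses. You have in effect localized a soft spot shared by your argument and by the paper's.

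A secondary point where your proposal is more careful than the paper: the FLS conditions (4) and (5) that you are trying to recover concern the \emph{vertex and edge groups} $\pi_1(Z_i)$ and $\pi_1(W_{i,j})$, not just $\Gamma$ itself, and Theorem~\ref{TheoremFADAssembly} applied to $\Gamma$ alone does not immediately give these. Your route --- finite decomposition complexity is inherited by subgroups, compact aspherical pieces and walls give finite classifying spaces for the subgroups, hence Theorem~\ref{TheoremFADAssembly} applies to each vertex and edge group --- is a legitimate way to get the $K$-theoretic part of what FLS need at the group-of-groups level, and is worth keeping even though the paper does not spell it out.
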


\begin{proof}
Conditions (4) and (5) imply that the Whitehead groups $Wh_i({\bf Z}\Gamma)=0$, for $i\leq 1$, as proved in \cite{goldfarbweakcoherence}.

Therefore the rest of the proof presented in Theorem 3.1 \cite{FLS} follows through, and the result holds. \end{proof}

Now we will concentrate on certain higher graph manifolds, explained briefly in the introduction (see \cite{CS}).

\begin{Lemma}\label{FIC-for-pieces-and-walls}
Assume $M$ is a higher graph manifold, all of whose pieces are trival as bundles. Then, each of the pieces ${Z_i}\cong N_{i}\times M_i$, and each of the walls $W_{i,j}$, satisfy the fibred isomorphism conjecture (FIC) of Farrell-Jones.
\end{Lemma}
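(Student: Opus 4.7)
The plan is to reduce FIC for each piece and each wall to FIC for groups where it is already known, using the triviality of the bundle structures to split fundamental groups as direct products and then invoking the closure of FIC under finite direct products. For a piece, $Z_i\cong N_i\times M_i$ immediately gives $\pi_1(Z_i)\cong \Gamma_i\times\pi_1(M_i)$, and I would verify FIC for each factor separately. The group $\Gamma_i$ is a uniform lattice in the simply connected Lie group $\widetilde{N_i}$; FIC for such lattices is established by Bartels-Farrell-Lück, and extended by Kammeyer-Lück-Rüping to lattices in virtually connected Lie groups. The factor $\pi_1(M_i)\cong \pi_1(V_i)$ is the fundamental group of a finite-volume pinched negatively curved manifold, so by the Margulis lemma it is hyperbolic relative to its cusp subgroups, which are virtually nilpotent (in particular, virtually polycyclic) and therefore satisfy FIC. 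Bartels' theorem giving FIC for relatively hyperbolic groups whose peripheral subgroups satisfy FIC then applies, and closure under finite direct products yields FIC for $\pi_1(Z_i)$.

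For a wall $W_{i,j}$, since the piece is a trivial bundle and a cusp neighborhood is itself a product, we obtain $W_{i,j}\cong N_i\times C_{i,j}$, where $C_{i,j}$ is a cross-section of a cusp of $V_i$, which is a compact infranilmanifold with virtually nilpotent fundamental group. Thus $\pi_1(W_{i,j})\cong \Gamma_i\times\pi_1(C_{i,j})$ is a direct product of groups satisfying FIC, and the conclusion again follows from the product closure.

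The step I expect to be the main obstacle is the invocation of FIC for the base group $\pi_1(M_i)$: one must check that the pinched-negatively-curved geometry verifies the hypotheses of Bartels' relative-hyperbolic theorem in the appropriate coefficient version (a suitable cocompact model for the classifying space for the family generated by the peripheral subgroups, plus FIC for the peripherals themselves in both the $K$- and $L$-theoretic settings). The lattice step and the direct-product closure are essentially off-the-shelf; the relative-hyperbolic input is the point where one has to be careful that all the geometric and coefficient hypotheses line up.
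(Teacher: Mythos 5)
Your proposal is correct, and it takes a genuinely different route from the paper's on the key step. Both proofs split $\pi_1(Z_i)\cong\pi_1(N_i)\times\pi_1(M_i)$ and then appeal to the closure of FIC under finite direct products (Theorem 2.9 in \cite{BFL}), and both handle the fibre factor via Bartels--Farrell--L\"uck for cocompact lattices in connected Lie groups. The divergence is in how FIC is established for the base factor $\pi_1(M_i)$: the paper asserts that $M_i$, being pinched negatively curved, ``also admits a $CAT(0)$ metric,'' and invokes FIC for $CAT(0)$ groups; you instead use that $\pi_1(V_i)$ is hyperbolic relative to its (virtually nilpotent) cusp subgroups and invoke Bartels' theorem for relatively hyperbolic groups with FIC-satisfying peripherals. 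The relative-hyperbolicity route has the advantage of being uniformly valid for all complete finite-volume pinched negatively curved $V_i$ — the $CAT(0)$ claim is well-documented for real hyperbolic cusped manifolds (via nonpositively curved metrics on the truncation), but for complex, quaternionic, or more general pinched negatively curved pieces it is not obviously available, since cusp cross-sections need not carry flat metrics; so your argument is arguably the more robust one. For the walls the two proofs are essentially the same in spirit: the paper cites \cite{BFL} directly by noting that $W_{i,j}$ is (up to finite cover) a quotient of a Lie group, while you make the product decomposition $W_{i,j}\cong N_i\times C_{i,j}$ explicit and then apply FIC for lattices together with FIC for virtually nilpotent groups and product closure, which amounts to the same lattice-in-a-Lie-group observation unpacked.
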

\begin{proof}

First notice that the validity of FIC for the walls $W_{i,j}$ follows from the work of Bartels-Farrell-L\"uck in \cite{BFL}, since these are quotients of Lie groups  (see also their Remark 2.13). 

As each piece $Z_i$ is a trivial fibre bundle
\begin{equation*}
 Z_i \cong N_{i}\times M_i
\end{equation*}
the fundamental group of $Z_i$ is a product 
\begin{equation*}
\pi_1( Z_i) \cong \pi_1(N_{i}) \times \pi_1(M_i).
\end{equation*}

Recall that $M_i$ is a manifold that admits a pinched negatively curved metric. So it also admits a $CAT(0)$ metric, and therefore FIC holds for $\pi_1(M_i)$. The fibres satisfy FIC following \cite{BFL}. Therefore $\pi_1( Z_i)$ also satisfies FIC, by Theorem 2.9 in \cite{BFL}. \end{proof}

As a consequence we obtain that the Borel conjecture holds for each of the pieces ${Z_i}$, with trivial fibration structure, and each of the walls $W_{i,j}$, and so condition (3) is verified.

\begin{Lemma} \label{weakregcoherent}
Let $M$ be a higher graph manifold, all of whose pieces $Z_{i}$ are trvial as bundles, and let $W_{i,j}$ denote its internal walls. Then, the rings ${\bf Z}\pi_1(W_{i,j})$ are  weakly regular coherent. 
\end{Lemma}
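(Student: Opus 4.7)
The plan is to reduce the claim to Theorem \ref{TheoremFADweakcoherence}, which states that if a group has finite asymptotic dimension then its integral group ring is weakly regular coherent. Thus it suffices to show that each $\pi_1(W_{i,j})$ has finite asymptotic dimension.

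First, I would identify the structure of the walls. Since by hypothesis each piece is a trivial bundle $Z_i \cong N_i \times M_i$, each internal wall $W_{i,j}$ of $M$ is a boundary component of some $Z_i$, and therefore has the form
\[
W_{i,j} \cong N_i \times T_{i,j},
\]
where $T_{i,j}$ is a boundary component of the truncated manifold $M_i$, i.e. the cross-section of a cusp of the pinched negatively curved manifold $V_i$. Such a cusp cross-section is a compact infranilmanifold, hence (possibly after passing to a finite cover) the quotient of a simply connected nilpotent Lie group by a uniform lattice. In particular $\pi_1(T_{i,j})$ is a cocompact lattice in a simply connected Lie group, and so is $\pi_1(N_i)$ by the very definition of the pieces in Definition \ref{mfds}.

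Next I would estimate the asymptotic dimension. By Theorem \ref{asdimLieLattice} of Carlsson--Goldfarb, both $\pi_1(N_i)$ and $\pi_1(T_{i,j})$ have finite asymptotic dimension, equal to the dimensions of the corresponding symmetric spaces $G/K$. Since the wall decomposes as a product,
\[
\pi_1(W_{i,j}) \cong \pi_1(N_i)\times \pi_1(T_{i,j}),
\]
and asymptotic dimension is subadditive under direct products (a standard consequence of the Union Theorem \ref{asdimUnion}), we conclude that $\pi_1(W_{i,j})$ has finite asymptotic dimension. Applying Theorem \ref{TheoremFADweakcoherence} with $R=\mathbb{Z}$ then yields that $\mathbb{Z}\pi_1(W_{i,j})$ is weakly regular coherent, as desired.

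The main obstacle, such as it is, lies in step one: verifying that the cusp cross-sections $T_{i,j}$ of a \emph{pinched} (rather than constant curvature) negatively curved manifold really are infranilmanifolds, so that $\pi_1(T_{i,j})$ is a cocompact lattice in a simply connected Lie group and Theorem \ref{asdimLieLattice} applies. Once that structural fact is in hand, the remainder of the argument is a direct application of the machinery recalled in the previous section.
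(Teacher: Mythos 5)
Your proof is correct and follows the same fundamental route as the paper: establish that $\pi_1(W_{i,j})$ has finite asymptotic dimension, then invoke Theorem \ref{TheoremFADweakcoherence}. The difference is in how the finite asymptotic dimension is obtained. The paper's own proof is very terse, simply citing ``the proof of Lemma \ref{asdimgraphmanifold}''; read carefully, this amounts to noting that each wall group sits $\pi_1$--injectively inside some $\pi_1(Z_i)$, whose asymptotic dimension was bounded there via the fibration exact sequence, and that asymptotic dimension is monotone under passage to subgroups. You instead argue directly on the walls, using the trivial-bundle hypothesis to write $W_{i,j}\cong N_i\times T_{i,j}$ and bounding the asymptotic dimension of each factor. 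Both routes are valid; yours is more self-contained and makes explicit the structural input (that cusp cross-sections of complete finite-volume pinched negatively curved manifolds are compact infranilmanifolds, so their fundamental groups are, up to finite index, cocompact lattices in connected Lie groups to which Theorem \ref{asdimLieLattice} applies) that the paper leaves implicit. One small attribution slip: the inequality ${\rm asdim}(G\times H)\leq {\rm asdim}\, G + {\rm asdim}\, H$ is not literally a consequence of the Union Theorem \ref{asdimUnion} as stated; it is the separate product theorem of Bell--Dranishnikov, also found in \cite{BD}. The fact itself is correct, so the argument goes through.
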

\begin{proof}
From  the proof of Lemma \ref{asdimgraphmanifold}, we  conclude  that  these  groups  have  finite  asymptotic  dimension. Now the result follows from Theorem \ref{TheoremFADweakcoherence}. 
\end{proof}

\begin{Lemma}\label{whitehead}
Let $M$ be a higher graph manifold, all of whose pieces $Z_{i}$ are trvial as bundles, and let $W_{i,j}$ denote its internal walls. Then, $Wh_{k}({\bf Z}\pi_1(W_{i,j})) = 0 = Wh_{k}({\bf Z}\pi_1(Z_i))$ for $k \leq 1$.
\end{Lemma}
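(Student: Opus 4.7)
The plan is to deduce both vanishing statements from Theorem~\ref{TheoremFADAssembly}, after first checking that $\pi_1(Z_i)$ and $\pi_1(W_{i,j})$ are torsion-free groups of finite asymptotic dimension admitting finite classifying spaces; an alternative route goes through Lemma~\ref{FIC-for-pieces-and-walls} and known consequences of the Farrell--Jones conjecture for torsion-free groups.

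First I would describe the walls and pieces explicitly. By hypothesis each piece $Z_i\cong N_i\times M_i$ is a compact aspherical manifold with boundary, so $\pi_1(Z_i)$ is torsion-free and $Z_i$ itself is a finite $K(\pi,1)$. Each internal wall $W_{i,j}$ is a boundary component of some such $Z_i$, which in the trivial-bundle case is diffeomorphic to $N_i\times \partial_j M_i$; the factor $\partial_j M_i$ is a horospherical cross-section of the pinched negatively curved manifold $V_i$, hence an almost flat infranilmanifold, so $W_{i,j}$ is a closed aspherical manifold. This supplies both torsion-freeness of $\pi_1(W_{i,j})$ and a finite model for its classifying space.

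Next, I would observe that the argument in the proof of Lemma~\ref{asdimgraphmanifold} applies verbatim to both $\pi_1(Z_i)$ and $\pi_1(W_{i,j})$ and shows they have finite asymptotic dimension: one combines Theorem~\ref{asdimLieLattice} applied to the lattice factor with the Cartan--Hadamard bound on $\pi_1(M_i)$ (or, for the walls, with the corresponding bound on $\pi_1(\partial_j M_i)$, which is a nilpotent lattice). Feeding these two conditions into Theorem~\ref{TheoremFADAssembly} then gives, for $G=\pi_1(Z_i)$ and $G=\pi_1(W_{i,j})$, that the algebraic $K$-theory assembly map is an isomorphism, and in particular $Wh_1({\bf Z}G)=0$.

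The main obstacle is upgrading the conclusion to $Wh_k({\bf Z}G)=0$ for all $k\leq 1$, since Theorem~\ref{TheoremFADAssembly} is quoted only for the Whitehead group. Two ways to overcome this: (i) use Goldfarb's theorem in its full $K$-theoretic strength, so that the assembly map is an isomorphism in every degree, and combine torsion-freeness with $K_n({\bf Z})=0$ for $n<0$ through the standard Atiyah--Hirzebruch argument to read off the vanishing of $\widetilde{K}_0({\bf Z}G)$ and of $K_{-n}({\bf Z}G)$ for all $n>0$; or (ii) appeal instead to Lemma~\ref{FIC-for-pieces-and-walls}, which provides the full fibred Farrell--Jones conjecture for these groups, and invoke the well-known fact that a torsion-free group with a finite $BG$ satisfying FIC has vanishing $Wh_k$ for every $k\leq 1$. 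Route (ii) seems cleanest and is consistent with the strategy used elsewhere in the paper.
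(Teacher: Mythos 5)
Your proposal is correct and follows essentially the same approach as the paper: the paper's one-line proof observes that the fundamental groups are torsion-free (by asphericity) and then cites Lemma~\ref{FIC-for-pieces-and-walls} for the FIC route, with Theorem~\ref{TheoremFADAssembly} offered as an alternative — precisely your routes (ii) and (i), respectively. You add useful detail the paper omits, notably the explicit verification of finite asymptotic dimension and a finite $K(\pi,1)$ for the individual pieces and walls, and your observation that Theorem~\ref{TheoremFADAssembly} as quoted only asserts vanishing of $Wh_1$, so the lower $Wh_k$ require either the full $K$-theoretic statement or the FIC route; this is a genuine subtlety that the paper's terse proof glosses over.
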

\begin{proof}
Since each of the walls and pieces are aspherical, their fundamental groups are torsion-free. By the previous Lemma \ref{FIC-for-pieces-and-walls}, the result holds for each of the pieces and walls. Alternatively,  the  result  follows  from  Theorem \ref{TheoremFADAssembly}. 
\end{proof}

\begin{Lemma}\label{squareroot}
Let $M$ be  a  manifold constructed as in Definition \ref{mfds}. For  every  $1\leq i\leq r$,  the  map  $N_i\to X$ is  $\pi_1$--injective. Moreover,  the  image  of $\pi_1(N_i$)  is  a  square  root  closed subgroup   in  the  group $\pi_1(X)$.  
\end{Lemma}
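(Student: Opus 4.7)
The plan is to use the graph-of-groups decomposition of $\pi_1(M)$ obtained by cutting $M$ along the internal walls, with associated Bass--Serre tree $T$ (I may assume the action of $\pi_1(M)$ on $T$ is without edge inversions, after subdividing if needed). Vertex stabilizers of $T$ are conjugates of the piece groups $\pi_1(Z_k)$ and edge stabilizers are conjugates of the wall groups $\pi_1(W_{k,l})$.

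For the $\pi_1$-injectivity of $N_i \hookrightarrow M$, I would argue in two steps. Since $N_i$, $Z_i$ and $M_i$ are all aspherical, the long exact homotopy sequence of the bundle $N_i \to Z_i \to M_i$ collapses to a short exact sequence
\[
1 \to \pi_1(N_i) \to \pi_1(Z_i) \to \pi_1(M_i) \to 1,
\]
so $\pi_1(N_i)$ embeds into $\pi_1(Z_i)$. The proof of Lemma \ref{asphericity} already used that each wall is $\pi_1$-injective in its neighboring pieces, so Bass--Serre theory yields the injection $\pi_1(Z_i) \hookrightarrow \pi_1(M)$; composing finishes this part.

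For the square-root closedness, let $g \in \pi_1(M)$ satisfy $g^2 \in \pi_1(N_i)$. Since $M$ is aspherical, $\pi_1(M)$ is torsion-free, so I may assume $g^2 \neq 1$. Let $v_i$ be the vertex of $T$ stabilized by $\pi_1(Z_i)$. Then $g^2$ is elliptic (it fixes $v_i$), which forces $g$ itself to be elliptic, since a hyperbolic $g$ of translation length $\ell$ would make $g^2$ hyperbolic of translation length $2\ell$. Once I show that $g$ fixes $v_i$, the image $\bar g$ in the quotient $\pi_1(M_i) = \pi_1(Z_i)/\pi_1(N_i)$ satisfies $\bar g^2 = 1$, and torsion-freeness of $\pi_1(M_i)$ (as the fundamental group of a pinched negatively curved manifold) forces $\bar g = 1$, placing $g$ in $\pi_1(N_i)$ as required.

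The main obstacle is to rule out $g v_i \neq v_i$. Suppose it held; then $g$ would reverse the geodesic $\gamma$ in $T$ from $v_i$ to $g v_i$, so absent edge inversions $\gamma$ has even length and $g$ fixes its midpoint vertex $m \neq v_i$. Because $\pi_1(M)$ preserves the wall-type labelling on edges of $T$, the two edges of $\gamma$ incident to $m$ must share a wall type, and thus correspond to two distinct cusps of $\widetilde{M_k}$ lying above the same boundary component $\partial_l M_k$, where $\pi_1(Z_k)$ is the stabilizer of $m$. The image $\bar g \in \pi_1(M_k) = \pi_1(Z_k)/\pi_1(N_k)$ would then exchange these two cusps, so $\bar g^2$ would lie in the intersection of the two distinct parabolic subgroups stabilizing them. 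In a pinched negatively curved manifold this intersection is trivial, since any isometry fixing two distinct points at infinity is either the identity or hyperbolic, and a hyperbolic element cannot be parabolic. Torsion-freeness of $\pi_1(M_k)$ then yields $\bar g = 1$, contradicting the swapping of two distinct cusps. Hence $g v_i = v_i$, and the conclusion follows as described above.
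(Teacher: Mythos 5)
Your proof is correct, and it takes a genuinely different route from the paper's. The lemma's statement has an undefined symbol $X$; you sensibly read it as $X = M$, which is the strongest reasonable reading and the one the later application calls for. The paper's own proof handles the $\pi_1$--injectivity the same way you do (via the fibration sequence, then Bass--Serre), but for square-root-closedness it outsources the global-to-local reduction to Cappell's Proposition VII.2 from \cite{capellsplitting}, which lets the authors pass to the wall inclusions $\pi_1(W_{i,j}) \to \pi_1(Z_i)$ and then dispose of the remaining local check by observing that $\pi_1(M_i)$ has no $2$--torsion (being torsion-free as an aspherical manifold group). Your argument instead works directly on the Bass--Serre tree $T$: you show that an element $g$ with $g^2 \in \pi_1(N_i)$ must be elliptic, that the putative ``swapping'' case $g v_i \ne g v_i$ forces $g$ to fix a midpoint vertex and swap two same-type edges there, and you rule that out using the geometry of parabolic subgroups of pinched negatively curved manifolds (distinct maximal parabolics intersect trivially). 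This in effect re-proves, in this specific situation, the content of the Cappell proposition the paper cites, so your proof is longer but more self-contained and transparent; it also makes explicit the case analysis (in particular the case $g v_i \ne v_i$) that the paper's terse appeal to Cappell plus the $2$--torsion remark leaves to the reader. The trade-off is that the paper's route is shorter once one trusts the cited splitting-theorem machinery, while yours requires only torsion-freeness and elementary facts about the Bass--Serre tree and parabolic isometries.
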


\begin{proof}
 Consider  the  long  exact  sequence  of  homotopy  groups   of  a  fibration:
 $$ \ldots \to \pi_{n}( N_{i}) \to \pi_{n}(Z_{i}) \to \pi_n(M_i)\to \ldots . $$
The  connectedness  of   $N_i$  implies  the   $\pi_1$--injectivity  condition. 

Using  proposition VII.2  in  page  168  of  \cite{capellsplitting},  it  suffices  to   verify  the  square root   closed  condition   in  the  fundamental  groups  of  the  edges $ \pi_1(W_i,j) \to \pi_1(Z_i)$. Using  the  long  exact  sequence   of  the  fibration  again,  this  is  equivalent  to   showing  that  there  are  no  $2$--torsion  elements  in  $\pi_1(M_i)$. This  is  certainly  the  case,  since   $M_i$  is  an  aspherical  manifold.   
\end{proof}

\begin{Lemma}\label{finitemodel}
Let $M$ be  a  manifold constructed as in Definition \ref{mfds} and $\Gamma=\pi_1(M)$. Then there exists a finite model for $K(\Gamma, 1)$. 
\end{Lemma}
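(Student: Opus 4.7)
The plan is to observe that $M$ itself (or a CW approximation of it) serves as the desired finite model. The argument has two essentially independent ingredients, both of which are already available to us.

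First, I would invoke Lemma \ref{asphericity}, which established that any manifold built as in Definition \ref{mfds} is aspherical. In particular $\pi_n(M)=0$ for all $n\geq 2$, so $M$ is a $K(\Gamma,1)$, where $\Gamma=\pi_1(M)$. This is the only place where we need to know anything specific about the construction; from this point on we only use that $M$ is a compact aspherical manifold (possibly with boundary).

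Second, I would argue that $M$ has the homotopy type of a finite CW complex. This is standard: each piece $Z_i$ is a fiber bundle with compact base $M_i$ (obtained by truncating cusps of a finite-volume pinched negatively curved manifold, hence compact) and compact fiber $N_i=\widetilde{N_i}/\Gamma_i$, so each $Z_i$ is compact; the gluing in step (4) of Definition \ref{mfds} pairs finitely many boundary components, so $M$ is a compact smooth $n$-manifold (possibly with boundary). Any such manifold admits a smooth triangulation, and hence is homotopy equivalent to a finite simplicial complex. Combining this with the first step, this finite complex is a finite model for $K(\Gamma,1)$.

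There is really no serious obstacle: the only substantive input is asphericity, which we already have from Lemma \ref{asphericity}, and compactness of $M$, which is immediate from the construction. One small point worth being explicit about is that Definition \ref{mfds} literally yields a smooth compact manifold (the $M_i$ are compact by the cusp-truncation step, the $N_i$ are compact quotients by uniform lattices, the bundles $Z_i\to M_i$ therefore have compact total space, and the gluing is along finitely many boundary components), so that invoking the existence of a finite triangulation is legitimate.
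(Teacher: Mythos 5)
Your proposal is correct and follows essentially the same route as the paper: the paper's proof simply notes that $M$ is aspherical (Lemma \ref{asphericity}) and hence $M$ itself is a finite model for $K(\Gamma,1)$. You spell out the compactness of $M$ and the existence of a finite triangulation, which the paper leaves implicit, but the underlying idea is identical.
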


\begin{proof}
Notice that the manifold $M$ is aspherical and hence it is itself a finite model for $K(\Gamma, 1)$. \end{proof}

Now we collect all of these auxiliary results to present:

\begin{Theorem}
Let $M$ be a higher graph manifold of dimension $\geq 6$. Assume that all of the pieces of $M$ are trivial bundles. Then $M$ satisfies the Borel conjecture.
\end{Theorem}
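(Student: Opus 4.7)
The plan is to apply Lemma \ref{modifiedstrategy} directly, verifying each of its five hypotheses for a higher graph manifold $M$ (of dimension $\geq 6$) whose pieces $Z_i \cong N_i \times M_i$ are trivial bundles. All of the hard work has been done in the preceding lemmas, so this proof will amount to assembling them in order.

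First, conditions (1) and (2) are handled by the general geometric structure of Definition \ref{mfds}: the walls $W_{i,j}$ are boundary components of fibre bundles over negatively curved bases, so the long exact sequence of the bundle (combined with the $\pi_1$--injectivity of the boundary tori of the negatively curved pieces $M_i$ into $M_i$, which comes from the horospherical truncation) gives $\pi_1$--injectivity of $W_{i,j} \hookrightarrow Z_i$. Asphericity of the pieces is immediate from the fibre bundle structure (with aspherical fibre $N_i$ and aspherical base $M_i$), and asphericity of the walls follows the same way since they are fibre bundles with infranilmanifold fibres over closed aspherical bases. Then condition (2) globally follows from Lemma \ref{asphericity}.

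Condition (3) is exactly the content of Lemma \ref{FIC-for-pieces-and-walls}: FIC holds for each $\pi_1(W_{i,j})$ (they are virtually polycyclic, hence covered by \cite{BFL}), and FIC holds for each $\pi_1(Z_i) \cong \pi_1(N_i) \times \pi_1(M_i)$ by combining the $CAT(0)$ result for $\pi_1(M_i)$ with FIC for $\pi_1(N_i)$ via Theorem 2.9 of \cite{BFL}. The validity of FIC in dimensions $\geq 5$ implies the Borel conjecture for the pieces and walls individually.

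For condition (4), Lemma \ref{asdimgraphmanifold} gives that $\Gamma = \pi_1(M)$ has finite asymptotic dimension, and then Lemma \ref{asdimsFDC} upgrades this to straight finite decomposition complexity, which in particular yields finite decomposition complexity. Finally, condition (5) is Lemma \ref{finitemodel}: since $M$ is itself a compact aspherical manifold, it provides a finite model for $K(\Gamma,1)$. All five hypotheses of Lemma \ref{modifiedstrategy} are verified, and the conclusion follows. The only step requiring any real care is confirming $\pi_1$--injectivity at the walls in condition (1), since the walls are now infranilmanifold bundles over cusp cross-sections rather than just tori as in \cite{FLS}; but this reduces via the fibration long exact sequence to the standard fact that cusp cross-sections are $\pi_1$--injective in the negatively curved pieces, so no genuine obstacle arises.
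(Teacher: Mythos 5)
Your proof is correct and follows essentially the same route as the paper: both verify the five hypotheses of Lemma \ref{modifiedstrategy} by invoking Lemmas \ref{asphericity}, \ref{FIC-for-pieces-and-walls}, \ref{asdimgraphmanifold} together with \ref{asdimsFDC}, and \ref{finitemodel}, with $\pi_1$--injectivity of the walls handled via the fibration long exact sequence as in Lemma \ref{squareroot}. Your write-up is somewhat more explicit than the paper's one-line citation list, but the argument is the same.
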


\begin{proof} 
Notice that these higher graph manifolds satisfy all the hypothesis of Lemma \ref{modifiedstrategy}. As has been shown in Lemmas \ref{asdimsFDC}, \ref{FIC-for-pieces-and-walls}, \ref{whitehead}, \ref{squareroot}, and \ref{finitemodel}.\end{proof}

\end{document}